\newtheorem{theorem}{Theorem}
\newtheorem{definition}{Definition}
\newtheorem{lemma}{Lemma}
\newtheorem{conjecture}{Conjecture}
\journal{Journal of \LaTeX\ Templates}
\begin{document}

\begin{frontmatter}



    \title{Linear convergence of the Collatz method  for computing the Perron eigenpair
of primitive dual number matrix\tnoteref{mytitlenote}}
    \tnotetext[mytitlenote]{This work is supported by the National Natural Science Foundation of China (Grant No. 12171271).}


    \author{Yongjun Chen }
    \author{\qquad Liping Zhang\corref{mycorrespondingauthor}}
    \cortext[mycorrespondingauthor]{Corresponding author. \emph{Email address}: ~~lipingzhang@mail.tsinghua.edu.cn}
    \address{Department of Mathematical Sciences, Tsinghua University, Beijing 100084, China}

\begin{abstract}
Very recently, Qi and Cui extended the Perron-Frobenius theory to dual number matrices with
primitive and irreducible nonnegative standard parts and proved that they have Perron eigenpair and Perron-Frobenius eigenpair. The Collatz method was also extended to find Perron eigenpair. Qi and Cui proposed two conjectures. One is the $k$-order power of a dual number matrix tends to zero if and only if the spectral radius of its standard part less than one, and another is the linear convergence of the Collatz method. In this paper, we confirm these conjectures and provide theoretical proof. The main contribution is to show that the Collatz method R-linearly converges with an explicit rate.
\end{abstract}



\begin{keyword}
Dual numbers, eigenvalues, dual primitive matrices, irreducible nonnegative
matrices, Collatz method, linear convergence

\MSC[2022] 15A03 \sep 49M29 \sep 65K05
\end{keyword}

\end{frontmatter}


\section{Introduction}\label{sec:introduction}
Dual numbers and dual number matrices have applications in dynamic analysis of spatial mechanisms \cite{qc10,qc13,qc17,qc18}, robotics \cite{qc5}, kinematic synthesis \cite{qc1,qc9}, Markov chain process \cite{qcnew}, and hence have attracted
much attention. Very recently, the eigenvalue theory has become a  research focus of dual complex matrix analysis \cite{qcnew, qc14,qc12,qc11}.

It is easy to see that dual numbers are special cases of dual complex numbers. Dual complex numbers was considered as a special case of dual quaternion numbers in \cite{qc11} and then eigenvalues of dual complex matrices were defined \cite{qc14}, which were further studied in \cite{qc12}. It
was shown that an $n\times n$ dual complex matrix has exactly $n$ eigenvalues with
$n$ appreciably linearly independent eigenvectors if and only if it is similar to
a diagonal matrix \cite{qc12}. Motivated by considering both probabilities and perturbation, or error bounds, or variances, in the Markov chain process, Qi and Cui \cite{qcnew} investigated eigenvalues of dual
number matrices with primitive and irreducible nonnegative standard parts following the approach of regarding dual complex numbers as a special case of dual quaternion numbers \cite{qc11}. They extended the Perron-Frobenius theory to dual number matrices with
primitive and irreducible nonnegative standard parts, and proved that such a dual number matrix always has a positive dual number eigenvalue with a positive dual number eigenvector, which is called Perron eigenvalue. The standard part of
Perron eigenvalue is larger than or equal to the modulus of the standard
part of any other eigenvalue of this dual number matrix, and the Collatz minimax theorem also holds for Perron eigenvalue. Based on the Collatz minimax theorem, the Collatz method was extended to compute Perron eigenpair of a primitive dual number matrix.

Qi and Cui proposed two conjectures in \cite{qcnew}. One is that $A^k$ with an $n\times n$ dual number matrix $A=A_s+A_d\varepsilon$ tends to zero matrix as the integer $k$ tends to $\infty$ if and only if the spectral radius of $A_s$ less than $1$ . They confirmed that this is true if $A_sA_d=A_dA_s$, but the general case is not obtained in \cite{qcnew}. Another is that the Collatz method is linearly convergent. This is observed numerically from Figure 2 in \cite{qcnew}, but without theoretical proof.

In this paper, we give positive answers for the proposed two conjectures in \cite{qcnew}. We use Jordan decomposition of real matrix and show that for a general $n\times n$ dual number matrix $A=A_s+A_d\varepsilon$, the necessary and sufficient condition for the convergence of $A^k$ to zero matrix $O$ is the spectral radius of the standard part $A_s$ less than $1$. Along with the proof line of this result, we establish an explicit linear convergence rate of the proposed Collatz method \cite[Algorithm 1]{qcnew} for computing Perron eigenpairs of dual number matrices with primitive and irreducible nonnegative standard parts.

This paper is  arranged as follows. In Section \ref{Preliminaries}, we review some preliminary definitions and results for dual numbers, dual complex numbers, eigenvalues of dual complex matrices, and primitive or irreducible nonnegative dual number matrix.
 In Section \ref{pow}, we give a necessary and sufficient condition for the convergence of the power of dual number matrix. In Section \ref{Colmet}, we recall the Collatz method for computing Perron eigenpair of a dual number matrix with primitive or irreducible nonnegative standard part and  establish an explicit linear convergence rate for this Collatz method. Finally, some conclusions are given in Section \ref{conclusion}.

\section{Preliminaries}\label{Preliminaries}
In this section, we review some preliminary knowledge of dual numbers, dual
complex numbers, eigenvalues of dual complex matrices \cite{qc14}, eigenvalues of primitive dual number matrices \cite{qcnew}, and eigenvalues of dual number matrices with irreducible nonnegative standard parts \cite{qcnew}.

\subsection{Dual Numbers and Dual Complex Numbers}

The set of positive integers is denoted by $\mathbb{N}$. We denote $\mathbb{R}$ and $\mathbb{C}$ as real number field and complex number field, respectively. Let $\mathbb{D}$, $\mathbb{D}_+$, and $\mathbb{D}_{++}$ respectively denote the sets of dual numbers,  nonnegative dual numbers, and positive dual numbers. Throughout of this paper, when a dual number is nonnegative or positive, we say it is a nonnegative
dual number or a positive dual number, respectively. If we say a number
is a nonnegative number or positive number, then that number should be a real
number.

The set of dual complex numbers is denoted by $\mathbb{DC}$. Let the symbol $\varepsilon$ denote the infinitesimal unit which is commutative with complex numbers and satisfies $\varepsilon\neq 0,\varepsilon^{2}=0$.

A number is denoted by a lower-case letter, a vector is denoted by a bold lower-case letter, and a matrix is denoted by a capital letter. We use $0$, $\bf{0}$, and $O$ to denote zero number, zero vector, and zero matrix,  respectively. For any positive integer $n$, $I_n$ denotes $n\times n$ unit matrix. For $a\in\mathbb{C}$, we respectively use $a^*$ and $|a|$ to denote its conjugate and magnitude. For $A=(a_{ij})\in\mathbb{R}^{n\times n}$, define $\|A\|_1=\sum\limits_{i=1}^n\sum\limits_{j=1}^n|a_{ij}|$.

\begin{definition}
A \textbf{dual complex number} $a=a_{s}+a_{d}\varepsilon\in \mathbb{DC}$ has standard part
$a_{s}\in \mathbb{C}$ and dual part $a_{d}\in \mathbb{C}$. If $a_{s}\neq 0$, then we say that $a$ is \textbf{appreciable}. If $a_{s},a_{d}\in \mathbb{R}$, then $a$ is called a \textbf{dual number}, i.e., $a\in\mathbb{D}$.
\end{definition}

The following definition lists some operators about dual complex numbers; see, e.g., \cite{qcnew,qc13}.

\begin{definition}\label{def1}
Let $a=a_{s}+a_{d}\varepsilon$ and $b=b_{s}+b_{d}\varepsilon$ be any two {\bf dual complex numbers}. We give the conjugate, magnitude of $a$, and the sum, product between $a$ and $b$.
\begin{itemize}
\item[{\rm (i)}] The {\bf conjugate} of $a$ is
$a^{\ast }=a_{s}^{\ast}+a_{d}^{\ast }\varepsilon,$ where $a_{s}^{\ast}$ and $a_{d}^{\ast}$ are conjugates of $a_s,a_d$ respectively.

\item[{\rm (ii)}] The {\bf sum} and {\bf product} of $a$ and $b$ are
$$ a+b=b+a=\left (a_{s}+b_{s} \right )+\left (a_{d}+b_{d} \right )\varepsilon,$$
and
$$ab=ba=a_{s}b_{s} +\left (a_{s}b_{d}+a_{d}b_{s} \right )\varepsilon.$$
Clearly, the add and multiplication  operators of dual complex numbers are all commutative.

\item[{\rm (iii)}] The {\bf magnitude} of $a$ is
$$\left | a\right |=\begin{cases}
\left | a_{s}\right |+\dfrac{a_{s}^*a_d}{|a_{s}|}\varepsilon, & \text{{\rm if} $a_{s}\neq 0$}, \\
\left | a_{d} \right |\varepsilon, & \text{{\rm if} $a_{s}=0$}.
\end{cases}$$
\end{itemize}
\end{definition}

For dual numbers, we can define the order and division operations \cite{qc13}.
\begin{definition}\label{def2}
Let $a=a_{s}+a_{d}\varepsilon$ and $b=b_{s}+b_{d}\varepsilon$ be any two {\bf dual numbers}. We give the magnitude of $a$, and the order and division operations between $a$ and $b$.
\begin{itemize}
\item[{\rm (i)}] We say that $a > b$ if
$$\text{$a_{s} > b_{s}$ ~~~{\rm or}~~~ $a_{s}=b_{s}$ {\rm and} $a_{d} > b_{d}$}.$$
Moreover, the sets $\mathbb{D}_+$ and $\mathbb{D}_{++}$ can be rewritten as
\begin{equation*}
\begin{aligned}
\mathbb{D}_+&=\{x=x_s+x_d\varepsilon\in\mathbb{D}:~~ \text{$x\ge 0$, {\rm i.e.}, $x_s\ge 0$ {\rm or} $x_s=0,x_d\ge 0$}\},\\
 \mathbb{D}_{++}&=\{x=x_s+x_d\varepsilon\in\mathbb{D}:~~ \text{$x> 0$, {\rm i.e.}, $x_s> 0$ {\rm or} $x_s=0,x_d> 0$}\}.
 \end{aligned}
\end{equation*}
\item[{\rm (ii)}] The {\bf magnitude} of $a$ is
$$\left | a\right |=\begin{cases}
\left | a_{s}\right |+{\rm sgn}( a_{s}) a_{d}\varepsilon, & {\rm if} \quad a_{s}\neq 0, \\
\left | a_{d} \right |\varepsilon, & \text{\rm otherwise.}
\end{cases}$$
Clearly, $|a|\in\mathbb{D}_+$, i.e., the magnitude of a dual number is a nonnegative dual number.

\item[{\rm (iii)}] When $b_s\ne 0$ or $a_s=b_s=0$, we can define the division operation of dual numbers as
$$\frac{ a_{s}+ a_{d}\varepsilon}{b_{s}+ b_{d}\varepsilon}=\begin{cases}
\dfrac{ a_{s}}{b_{s}}+\left (\dfrac{ a_{d}}{b_{s}}-\dfrac{ a_{s}b_{d}}{b_{s}b_{s}} \right )\varepsilon, & {\rm if} \quad b_{s}\neq 0, \\
\dfrac{ a_{d}}{b_{d}}+c\varepsilon, & {\rm if} \quad a_{s}=b_{s}=0,
\end{cases}
$$
where $c$ is an arbitrary real number.
\end{itemize}
\end{definition}

Following the above definitions, some related preliminary results of dual complex number vectors are similarly given.  A dual complex number vector is denoted by $\mathbf x=\mathbf{x}_s+\mathbf{x}_d\varepsilon=\left ( x_{1},x_{2},\cdots ,x_{n}\right )^{T}\in \mathbb{DC}^{n}$.
If $\mathbf x_{s}\neq \mathbf 0 $, then we say that $\mathbf x$ is appreciable.
The $2$-norm of $\mathbf{x}$ is defined as
$$\left \| \mathbf x \right \| _{2}=
\begin{cases}
\left \| \mathbf x_{s} \right \| _{2}+ \dfrac{\mathbf x_{s}^{\ast }\mathbf x_{d}}{\left \| \mathbf x_{s} \right \| _{2}}\varepsilon, & \text{if} \quad  \mathbf x_{s}\neq  \mathbf 0,\\
\left \| \mathbf x_{d} \right \| _{2} \varepsilon, & \text{if} \quad \mathbf x_{s}\neq  \mathbf 0.
\end{cases}$$
The dual complex number vector $\mathbf x$ is called a unit vector if $\left \| \mathbf x \right \| _{2}=1$, or equivalently,
$\left \| \mathbf x_{s} \right \| _{2}=1$ and $ \mathbf x_{s}^{\ast } \mathbf x_{d}=0$.  We use $\mathbf e_{1},\cdots ,\mathbf e_{n}$ to denote the unit vectors in $\mathbb{R}^{n}$. Clearly, they are also unit vectors of $\mathbb{DC}^{n}$.

Following \cite[Theorem 3.3]{qc4}, the normalization of $\mathbf x$ is defined as
$$\mathbf{y}=\dfrac{\mathbf x}{\left \| \mathbf x \right \|_{2}}=\mathbf{y}_s+\mathbf{y}_d\varepsilon,$$
where
$$ \mathbf{y}_s=\dfrac{\mathbf x_{s}}{\left \| \mathbf x_{s} \right \| _{2}}, \quad \mathbf{y}_d=
\dfrac{ \mathbf x_{d}}{\left \| \mathbf x_{s} \right \| _{2}}-\dfrac{ \mathbf x_{s}}{\left \| \mathbf x_{s} \right \| _{2}}\dfrac{\mathbf x_{s}^{\ast }\mathbf x_{d}}{\left \| \mathbf x_{s} \right \| _{2}^{2}}
$$
if $\mathbf x_{s} \neq 0$;  otherwise,
$$\mathbf{y}_s=\dfrac{ \mathbf x_{d}}{\left \| \mathbf x_{d} \right \| _{2}},\quad  \text{$\mathbf y_{d}$ is any complex vector satisfying $\mathbf y_{s}^{\ast }\mathbf y_{d}=0$}.$$

 An $n\times n$ dual complex number matrix is denoted by $A=A_{s}+A_{d}\varepsilon\in \mathbb{DC}^{n\times n}$. We say that $A$ is invertible if $A_{s}$ is invertible. By \cite{qc12}, if $A$ is invertible, then  $A^{-1}=A_{s}^{-1}-A_{s}^{-1}A_{d}A_{s}^{-1}\varepsilon$. In addition, the
$F^{R}$-norm of $A$ is defined as $\left \| A\right \|_{F^{R}}=\sqrt{\left \| A_{s}\right \|_{F^{R}}+\left \| A_{d}\right \|_{F^{R}}}$. It is easy to see that $A$ tends to $O$ if and only if its every element tends to $0$.

\subsection{Perron Eigenpair of Primitive Dual Number Matrices}

Eigenvalues of dual complex matrices were introduced in \cite{qc14} and studied in
details in \cite{qc12}.

\begin{definition}[Eignvalues of Dual Complex Number Matrices] Let  $A=A_s+A_d\varepsilon\in \mathbb{DC}^{n\times n}$. If there exist a dual complex number $\lambda_s+\lambda_d\varepsilon$ and an appreciable dual complex number vector $\mathbf{x}=\mathbf{x}_s+\mathbf{x}_d\varepsilon\in \mathbb{DC}^n$, such that
\begin{equation}\label{eigenvalue}
A\mathbf{x}=\lambda \mathbf{x},
\end{equation}
then $\lambda$ is called an {\bf eigenvalue} of $A$ with an {\bf eigenvector} $\mathbf{x}$.
\end{definition}

Since $\varepsilon^2=0$, it follows from (\ref{eigenvalue}) that
\begin{equation}\label{eigenvalue1}
A_{s}\mathbf{x}_{s}=\lambda_{s} \mathbf x_{s}
\end{equation}
and
\begin{equation}\label{eigenvalue2}
\left (A -\lambda_{s}I\right )\mathbf x_{d}-\lambda_{d} \mathbf x_{s}=-A_{d}\mathbf x_{s}.
\end{equation}
Since $\mathbf{x}$ is appreciable in (\ref{eigenvalue}), i.e., $\mathbf{x}_{s}\neq \mathbf{0}$, it follows from (\ref{eigenvalue1}) that $\lambda_s$ is an eigenvalue of the complex matrix $\mathbf{x}_s$ with an eigenvector $\mathbf{x}_s$. We use $\rho(A_s)$ to denote the spectral radius of $A_s$, then $|\lambda_s|\le \rho(A_s)$. By (\ref{eigenvalue2}) and Definition \ref{def1} (iii), we see that for any eigenvalue
$\lambda$ of $A$ and any positive number $\delta$, it holds
\begin{equation}\label{eigeq1}
\rho(A_s)>|\lambda|-\delta.
\end{equation}

We say that  $A=A_{s}+A_{d}\varepsilon$ is an $n\times n$ {\bf dual number matrix} if $A_s,A_d$ are all $n\times n$ real matrix, i.e., $A_s,A_d\in\mathbb{R}^{n\times n}$. The set of $n\times n$ dual number matrices is denoted by $\mathbb{D}^{n\times n}$. We say that $A\in \mathbb{D}^{n\times n}$ with a  positive  standard part (i.e., $A_s\in \mathbb{R}_{++}^{n\times n}$ or  $A_s>O$) is a {\bf positive  dual number matrix}. As shown in \cite{qc12,qc13,qcnew}, a square dual number matrix or even positive dual number matrix may have no eigenvalue at all.  So,  Qi and Cui \cite{qcnew} considered the eigenvalue of a dual number matrix when its
standard part is primitive or irreducible nonnegative. Some results on primitive matrices and irreducible nonnegative matrices can be referred in \cite{qc2,qc16}.  We now list some definitions and results on eigenvalues of dual number matrices with
  primitive or irreducible nonnegative standard parts, which were given in \cite{qcnew}.

\begin{definition}[Primitive Dual Number Matrix]
We say that  $A=A_{s}+A_{d}\varepsilon\in \mathbb{D}^{n\times n}$ is a {\bf primitive dual number matrix}, if
$A_{s}$ is a primitive matrix, i.e., $A_{s}\geq O$, and there exists a positive integer $k$ such that $A_s^{k}$ is positive, i.e., $A^k_{s}> O$.
\end{definition}

\begin{definition}[Irreducible Nonnegative Dual Number Matrix]
We say that $A=A_{s}+A_{d}\varepsilon\in \mathbb{D}^{n\times n}$ is an {\bf irreducible nonnegative dual number matrix}, if $A_{s}$ is an irreducible nonnegative matrix, i.e., $A_s\ge O$ and it is not permutation-similar to any block-triangular matrix.
\end{definition}

\begin{theorem}\label{lemma2}
Suppose that  $A=A_{s}+A_{d}\varepsilon$ is an $n\times n$ primitive dual number matrix, then there exists an eigenvalue $\lambda=\lambda_{s}+\lambda_{d}\varepsilon\in \mathbb{D}_{++}$ of  $A$ with corresponding eigenvector
$\mathbf{x}=\mathbf{x}_{s}+\mathbf{x}_{d}\varepsilon\in \mathbb{D}^n_{++}$ such that
\begin{itemize}
\item[{\rm (i)}] $\lambda_{s}$ is the Perron eigenvalue of $A_{s}$ with multiplicity $1$, $\mathbf{x}_{s}$ is the Perron eigenvector of $A_{s}$, and for any other eigenvalue $\mu =\mu _{s}+\mu _{d}\varepsilon\in \mathbb{DC}$ of $A$ with corresponding eigenvector $\mathbf{z}=\mathbf{z}_{s}+\mathbf{z}_{d}\varepsilon\in \mathbb{DC}^n$, it holds $
\lambda _{s}>|\mu _{s}|$ and $\mathbf{z}_{s}$ cannot be a nonnegative vector;
\item[{\rm (ii)}] if $A_{d}$ is nonnegative, then $\lambda_{d}$ is nonnegative; if $A_{d}$ is nonnegative and nonzero, then $\lambda_{d}$ is positive. Moreover, \begin{equation}\label{eq5}
\lambda _{d}=\dfrac{\mathbf y_{s}^{T}A_{d}\mathbf x_{s}}{\mathbf y_{s}^{T}\mathbf x_{s}},
\end{equation}
where $\mathbf y_{s}$ is the Perron eigenvector of $A^T_s$ corresponding to $\lambda_s$ with $\mathbf{y}_s^T\mathbf{x}_s>0$;
\item[{\rm (iii)}] $\mathbf{x}_{d}$ is the solution of
\begin{equation}
\left ( A_{s}-\lambda _{s}I\right )\mathbf x_{d}=\left ( \lambda _{d}I-A_{d}\right )\mathbf x_{s}.
\end{equation}
If we require the 2-norm of $\mathbf x$ to be $1$, then $\mathbf x_{d}$ is unique.

\item[{\rm (iv)}] it holds for any $\mathbf{u}\in\mathbb{D}_{++}^n$ that
\begin{equation}
\underset{i}{\min}\dfrac{\left ( A\mathbf u\right )_{i}}{u_{i}}\leq \lambda \leq \underset{i}{\max}\dfrac{\left ( A\mathbf u\right )_{i}}{u_{i}}.
\end{equation}
Furthermore,
\begin{equation}
\underset{\mathbf{u}>\mathbf{0}}{\max}\,\underset{i}{\min}\dfrac{\left ( A\mathbf{u}\right )_{i}}{u_{i}}\leq \lambda \leq \underset{\mathbf{u} > \mathbf{0}}{\min}\,\underset{i}{\max}\dfrac{\left ( A\mathbf u\right )_{i}}{u_{i}}.
\end{equation}
\end{itemize}
\end{theorem}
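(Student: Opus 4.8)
The plan is to reduce the entire statement to the classical Perron--Frobenius theorem for the primitive real matrix $A_s$ (see \cite{qc2,qc16}), supplemented by a Fredholm-type solvability analysis of the dual part. Expanding $A\mathbf{x}=\lambda\mathbf{x}$ and using $\varepsilon^2=0$ splits it, exactly as in \eqref{eigenvalue1}--\eqref{eigenvalue2}, into the standard-part equation $A_s\mathbf{x}_s=\lambda_s\mathbf{x}_s$ and the dual-part equation $(A_s-\lambda_s I)\mathbf{x}_d=(\lambda_d I-A_d)\mathbf{x}_s$. First I would treat the standard part: since $A_s$ is primitive, Perron--Frobenius supplies a simple eigenvalue $\lambda_s=\rho(A_s)>0$ with a strictly positive right eigenvector $\mathbf{x}_s$, a strictly positive left eigenvector $\mathbf{y}_s$ (normalised so that $\mathbf{y}_s^T\mathbf{x}_s>0$), and the strict spectral gap $\lambda_s>|\mu_s|$ for every other eigenvalue $\mu_s$ of $A_s$. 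The assertion in (i) that no other eigenvalue admits a nonnegative $\mathbf{z}_s$ is precisely the uniqueness clause of Perron--Frobenius: a nonnegative eigenvector forces the corresponding eigenvalue to be $\rho(A_s)=\lambda_s$, and the solvability condition below shows the dual eigenvalue with standard part $\lambda_s$ is unique, so $\mu\neq\lambda$ rules out nonnegativity of $\mathbf{z}_s$.

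For (ii) and (iii), the key point is that simplicity of $\lambda_s$ makes $A_s-\lambda_s I$ singular with a one-dimensional right null space (spanned by $\mathbf{x}_s$) and a one-dimensional left null space (spanned by $\mathbf{y}_s$). Hence the dual-part equation is solvable in $\mathbf{x}_d$ if and only if its right-hand side is annihilated by $\mathbf{y}_s$, i.e. $\mathbf{y}_s^T(\lambda_d I-A_d)\mathbf{x}_s=0$; solving this scalar condition forces $\lambda_d=\mathbf{y}_s^T A_d\mathbf{x}_s/(\mathbf{y}_s^T\mathbf{x}_s)$, which is \eqref{eq5}. Since $\mathbf{x}_s,\mathbf{y}_s>\mathbf{0}$ we have $\mathbf{y}_s^T\mathbf{x}_s>0$, so $A_d\ge O$ gives $\lambda_d\ge 0$ and $A_d\ge O,\ A_d\neq O$ gives $\mathbf{y}_s^T A_d\mathbf{x}_s>0$, hence $\lambda_d>0$. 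With $\lambda_d$ so chosen the equation is consistent, its general solution is $\mathbf{x}_d=\mathbf{x}_d^{0}+t\mathbf{x}_s$ for a particular solution $\mathbf{x}_d^{0}$ and $t\in\mathbb{R}$, and the normalisation $\|\mathbf{x}\|_2=1$ (equivalently $\|\mathbf{x}_s\|_2=1$ and $\mathbf{x}_s^T\mathbf{x}_d=0$) pins down $t$ uniquely, giving (iii). Finally $\lambda_s>0$ and $\mathbf{x}_s>\mathbf{0}$ make $\lambda\in\mathbb{D}_{++}$ and $\mathbf{x}\in\mathbb{D}_{++}^n$ by the order of Definition \ref{def2}(i), regardless of the dual parts.

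For the Collatz minimax statement (iv), I would apply the construction of (i)--(iii) to $A^T$ (also primitive) to obtain a left dual eigenpair $\mathbf{y}=\mathbf{y}_s+\mathbf{y}_d\varepsilon\in\mathbb{D}_{++}^n$ with $\mathbf{y}^T A=\lambda\mathbf{y}^T$ for the \emph{same} $\lambda$, the two formulas for $\lambda_d$ coinciding. Writing $r=\min_i (A\mathbf{u})_i/u_i$ and $R=\max_i (A\mathbf{u})_i/u_i$ for $\mathbf{u}\in\mathbb{D}_{++}^n$, the defining inequalities $r\,u_i\le (A\mathbf{u})_i\le R\,u_i$ may be multiplied by $y_i\ge 0$ and summed; using $\mathbf{y}^T A\mathbf{u}=\lambda\,\mathbf{y}^T\mathbf{u}$ and dividing by the positive dual number $\mathbf{y}^T\mathbf{u}$ yields $r\le\lambda\le R$. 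Taking the maximum (respectively minimum) over $\mathbf{u}$ produces the maximin/minimax chain, with equality realised at $\mathbf{u}=\mathbf{x}$, where $(A\mathbf{x})_i/x_i\equiv\lambda$. The main obstacle I anticipate is arithmetic rather than structural: one must verify that the lexicographic dual order of Definition \ref{def2}(i) is genuinely compatible with multiplication by positive dual numbers and with the dual division of Definition \ref{def2}(iii), so that the passage from $(A\mathbf{u})_i/u_i\ge r$ to $(A\mathbf{u})_i\ge r\,u_i$ and the summation step are legitimate. Establishing these order-compatibility facts for $\mathbb{D}$ is where the care is needed; the remainder is a faithful transcription of the real Perron--Frobenius and Collatz--Wielandt arguments.
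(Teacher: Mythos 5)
The paper states Theorem~\ref{lemma2} as background quoted from \cite{qcnew} and gives no proof of it, so there is no in-paper argument to compare yours against; what can be judged is the correctness of your reconstruction, which follows the standard route (classical Perron--Frobenius for the primitive standard part $A_s$, plus the Fredholm solvability condition $\mathbf{y}_s^{T}(\lambda_d I-A_d)\mathbf{x}_s=0$ for the dual-part equation) and is essentially sound. Two points deserve slightly more care than you give them. First, the strict inequality $\lambda_s>|\mu_s|$ in (i) is not purely a statement about $A_s$: one must rule out a second eigenvalue $\mu\neq\lambda$ of the dual matrix $A$ with $\mu_s=\lambda_s$, which follows because simplicity of $\lambda_s$ forces $\mathbf{z}_s$ to be a multiple of $\mathbf{x}_s$ and the solvability condition then forces $\mu_d=\lambda_d$; you invoke this uniqueness only for the nonnegativity claim, but it is needed for the spectral-gap claim as well. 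Second, in (iv) a vector $\mathbf{u}\in\mathbb{D}_{++}^{n}$ may have components with zero standard part, and then the quotient $(A\mathbf{u})_i/u_i$ is defined under Definition~\ref{def2}~(iii) only if $(A_s\mathbf{u}_s)_i=0$ too, so the Collatz--Wielandt step should either restrict to appreciable $\mathbf{u}$ (as the algorithm in Section~\ref{Colmet} effectively does) or treat that degenerate case separately. The order-compatibility facts you flag as the main obstacle do check out directly: multiplication by a nonnegative dual number preserves the lexicographic order of Definition~\ref{def2}~(i), and $(a/b)\cdot b=a$ whenever the division of Definition~\ref{def2}~(iii) is defined, so the passage from $r\le (A\mathbf{u})_i/u_i\le R$ to $r\,u_i\le (A\mathbf{u})_i\le R\,u_i$, the summation against $\mathbf{y}>\mathbf{0}$, and the final division by $\mathbf{y}^{T}\mathbf{u}$ are all legitimate.
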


We follow the names given in \cite{qcnew} and call $\lambda$ in Theorem \ref{lemma2}
the {\bf Perron eigenvalue} of $A$ and $\mathbf{x}$ the {\bf Perron eigenvector} or {\bf right Perron
eigenvector} of $A$. $\lambda$ is called  the {\bf spectral radius} of $A$, denoted as $\rho(A)=\lambda$.
The Perron eigenvector $\mathbf{y}=\mathbf{y}_s+\mathbf{y}_d\varepsilon$ of $A^T=A_s^T+A_d^T\varepsilon$
is called the {\bf left Perron eigenvector} of $A$. Moreover, $\mathbf{y}^T\mathbf{x}>0$.

By \cite[Theorem 5.1]{qcnew},  an $n\times n$ irreducible nonnegative dual number matrix $A=A_{s}+A_{d}\varepsilon$ has the {\bf Perron-Frobenius eigenvalue}, and there exists $\beta>0$ such that $(A_s+\beta I_n)+A_{d}\varepsilon$ is a primitive dual number matrix. So, the Collatz method given in \cite[Algorithm 1]{qcnew} also can compute the Perron-Frobenius eigenvalue of $A$. Hence, we only discuss the Collatz method for Perron eigenpairs of primitive dual number matrices in the sequel.

The numerical results given in \cite{qcnew} showed that the Collatz method has a rate of linear convergence, but without theoretical proof. So, two conjectures were proposed in \cite{qcnew}, one is the convergence of $k$-order power of a dual number matrix and the other is the linear convergence of the Collatz method.
In the next two sections, we give positive answers for this two conjectures. The proof approach of the first conjecture is the key to the proof of the second one.

\section{Convergence of $k$-Order Power of Dual Number Matrix}\label{pow}

The purpose of this section is to give a positive answer to the conjecture given in \cite{qcnew}, which is concerned with the convergence of $k$-order power of a dual number matrix.

\begin{conjecture}\label{cx1}
Let  $A=A_s+A_d\varepsilon$ be an $n\times n$ dual number matrix and $k\in \mathbb{N}$. Then
$$\lim_{k\to\infty}A^k=O$$
if and only if $\rho(A_s)<1$, where $\rho(A_s)$ is the spectral radius of $A_s$.
\end{conjecture}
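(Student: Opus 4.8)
The plan is to compute the powers of $A$ explicitly and then treat the standard and dual parts separately. Since $\varepsilon^2=0$, a straightforward induction gives
$$A^k = A_s^k + B_k\varepsilon, \qquad B_k := \sum_{j=0}^{k-1} A_s^{\,j} A_d A_s^{\,k-1-j}.$$
Because $A^k\to O$ is equivalent to the simultaneous convergence $A_s^k\to O$ and $B_k\to O$ (recall that a dual number matrix tends to $O$ precisely when each of its entries, hence both its standard and dual parts, tends to $0$), the \emph{only if} direction is immediate: if $A^k\to O$ then its standard part $A_s^k\to O$, and for a real matrix this is well known to be equivalent to $\rho(A_s)<1$.

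For the \emph{if} direction, assume $\rho(A_s)<1$. Then $A_s^k\to O$ automatically, so the entire difficulty is to show that the dual part $B_k\to O$. When $A_sA_d=A_dA_s$ the sum collapses to $B_k = kA_s^{\,k-1}A_d$, and the claim follows from $k\,\rho(A_s)^{k-1}\to 0$; this is exactly the special case already settled in \cite{qcnew}. To handle the general, non-commuting case I would invoke the Jordan decomposition $A_s = PJP^{-1}$, which gives
$$B_k = P\left(\sum_{j=0}^{k-1} J^{\,j}\,\widehat{A}_d\,J^{\,k-1-j}\right)P^{-1}, \qquad \widehat{A}_d := P^{-1}A_dP,$$
so that $B_k\to O$ if and only if the bracketed inner sum tends to $O$, since $P,P^{-1}$ are fixed invertible matrices. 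Partitioning $\widehat{A}_d$ conformally with the Jordan blocks of $J$, each block of the inner sum takes the form $\sum_{j=0}^{k-1} J_\ell^{\,j}\,C\,J_m^{\,k-1-j}$ for two Jordan blocks $J_\ell,J_m$ with (possibly complex) eigenvalues $\lambda,\mu$ satisfying $|\lambda|,|\mu|\le\rho(A_s)<1$.

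Writing $J_\ell=\lambda I + N$ with $N$ nilpotent, the expansion $J_\ell^{\,j}=\sum_{r} \binom{j}{r}\lambda^{\,j-r}N^r$ shows that every entry of $J_\ell^{\,j}$ is a polynomial in $j$ times a power of $\lambda$, and likewise every entry of $J_m^{\,k-1-j}$ is a polynomial in $k-1-j$ times a power of $\mu$. Consequently each entry of the inner-sum block has the shape $\sum_{j=0}^{k-1} p(j)\,q(k-1-j)\,\lambda^{\,j}\mu^{\,k-1-j}$ with $p,q$ polynomials of degree bounded by the respective Jordan block sizes. Setting $\rho:=\rho(A_s)<1$ and taking moduli,
$$\left|\sum_{j=0}^{k-1} p(j)\,q(k-1-j)\,\lambda^{\,j}\mu^{\,k-1-j}\right| \le \rho^{\,k-1}\sum_{j=0}^{k-1}|p(j)|\,|q(k-1-j)|,$$
and the convolution of two polynomially bounded sequences is again bounded by a polynomial in $k$. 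Since a polynomial in $k$ multiplied by $\rho^{\,k-1}$ tends to $0$ whenever $\rho<1$, every entry of $B_k$ tends to $0$, whence $B_k\to O$ and the proof is complete.

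The main obstacle is the non-commutativity of $A_s$ and $A_d$: unlike the commuting case, the sum defining $B_k$ does not telescope, so a term-by-term analysis seems unavoidable. Passing to Jordan form localizes the problem to a single pair of Jordan blocks, and the crux is the elementary but essential estimate that a (polynomial in $k$)$\,\times\rho^{\,k}$ term vanishes in the limit for $\rho<1$. This same polynomial-times-exponential bookkeeping is precisely what will later furnish the explicit R-linear rate for the Collatz method.
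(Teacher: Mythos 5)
Your proposal is correct and follows essentially the same route as the paper: expand $A^k=A_s^k+\bigl(\sum_{j}A_s^{\,j}A_dA_s^{\,k-1-j}\bigr)\varepsilon$, pass to the Jordan form of $A_s$, and bound each cross term by a polynomial in $k$ times $\rho(A_s)^{k}$, which vanishes since $\rho(A_s)<1$. The only difference is presentational — you estimate entrywise per pair of Jordan blocks, while the paper majorizes $P^{-1}A_dP$ by a constant matrix and works with the $\|\cdot\|_1$ norm — but the key estimate is identical.
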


Qi and Cui \cite{qcnew} confirmed that Conjecture \ref{cx1} is true if $A_sA_d=A_dA_s$, but the general case is not obtained in \cite{qcnew}.

As we know, if $T$ is an $n\times n$ complex matrix, then $T^k\to O$ as $k\to \infty$ if and only if $\rho \left ( T\right )< 1$. That is,
\begin{equation}\label{realmeq1}
\lim_{k\to\infty}T^k=O\quad \Longleftrightarrow\quad \rho(T)<1,\quad \forall ~T\in\mathbb{C}^{n\times n}.
\end{equation}
By this conclusion and Jordan decomposition of $A_{s}$, we show that Conjecture \ref{cx1} is true for general dual number matrices.

\begin{theorem}\label{thm2}
Let $A=A_{s}+A_{d}\varepsilon\in\mathbb{D}^{n\times n}$ and $k\in\mathbb{N}$. Then $A^{k}$ converges to zero matrix $O$ if and only if $\rho \left ( A_{s}\right )< 1$.
\end{theorem}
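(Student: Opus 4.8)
The plan is to expand $A^k$ explicitly, separate its standard and dual parts, and reduce the claim to two matrix-convergence statements. Since $\varepsilon^2=0$, a direct induction gives
\begin{equation}\label{plan:expand}
A^k = A_s^k + \left(\sum_{j=0}^{k-1} A_s^{\,j} A_d A_s^{\,k-1-j}\right)\varepsilon,
\end{equation}
so the standard part of $A^k$ is $A_s^k$ and its dual part is $D_k:=\sum_{j=0}^{k-1} A_s^{\,j} A_d A_s^{\,k-1-j}$. Because $A^k\to O$ is equivalent to the entrywise convergence of both parts, proving the theorem amounts to showing that the joint convergence $A_s^k\to O$ and $D_k\to O$ is characterized by $\rho(A_s)<1$.

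The \emph{only if} direction is immediate: if $A^k\to O$, then in particular its standard part $A_s^k\to O$, and the classical criterion (\ref{realmeq1}) applied to $T=A_s$ forces $\rho(A_s)<1$. For the \emph{if} direction, assume $\rho(A_s)<1$. Then (\ref{realmeq1}) again yields $A_s^k\to O$, so the whole problem reduces to the dual part, i.e.\ to proving $D_k\to O$. This is precisely the step Qi and Cui could only settle under the commuting hypothesis $A_sA_d=A_dA_s$, where $D_k=kA_s^{k-1}A_d$; the difficulty in the general case is that the non-commuting sum $D_k$ cannot be collapsed into a single power.

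To overcome this I would extract a uniform geometric bound on the powers of $A_s$ from its Jordan decomposition. Writing $A_s=PJP^{-1}$ with $J$ block-diagonal in Jordan blocks, and observing that each block with eigenvalue $\mu$ (necessarily $|\mu|\le\rho(A_s)$) has entries of the form $\mathrm{poly}(j)\,|\mu|^{j}$, one fixes any $r$ with $\rho(A_s)<r<1$ and absorbs the polynomial factors, together with $\|P\|_1\|P^{-1}\|_1$, to obtain a constant $C>0$ with
\begin{equation}\label{plan:geo}
\|A_s^{\,j}\|_1 \le C\,r^{\,j}\qquad\text{for all } j\ge 0.
\end{equation}
Using the submultiplicativity of $\|\cdot\|_1$ in (\ref{plan:expand}) then gives
\begin{equation}\label{plan:bound}
\|D_k\|_1 \le \sum_{j=0}^{k-1}\|A_s^{\,j}\|_1\,\|A_d\|_1\,\|A_s^{\,k-1-j}\|_1 \le C^2\|A_d\|_1\,k\,r^{\,k-1}.
\end{equation}
Since $r<1$, the right-hand side tends to $0$ as $k\to\infty$, so $D_k\to O$ and hence $A^k\to O$, which completes the argument.

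The main obstacle is establishing the uniform bound (\ref{plan:geo}): the non-commutativity of $A_s$ and $A_d$ rules out any closed form for $D_k$, so the proof must control every term of the non-commuting sum simultaneously rather than a single representative. Once (\ref{plan:geo}) is in hand the remainder is a routine geometric estimate, the only delicate point being that the linear factor $k$ in (\ref{plan:bound}) is harmless precisely because the chosen rate $r$ is strictly below $1$, so exponential decay dominates the polynomial growth. I expect this same geometric control of $\|A_s^{\,j}\|_1$ to be the engine behind the explicit linear convergence rate for the Collatz method established later.
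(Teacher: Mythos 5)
Your proof is correct and follows essentially the same route as the paper's: both expand $A^{k}=A_{s}^{k}+\bigl(\sum_{j=0}^{k-1}A_{s}^{\,j}A_{d}A_{s}^{\,k-1-j}\bigr)\varepsilon$, get necessity from the standard part, and use the Jordan decomposition of $A_{s}$ to show the non-commuting sum in the dual part decays. The only difference is packaging: you absorb the polynomial factors from the Jordan blocks into a geometric rate $r\in(\rho(A_{s}),1)$ and invoke submultiplicativity of $\|\cdot\|_{1}$, whereas the paper carries an explicit entrywise bound of the form $\alpha n^{4}\lambda^{-2n+1}k^{2n}\lambda^{k}$; your version is marginally cleaner and sidesteps the negative power of $\lambda$ when $\rho(A_{s})$ is small or zero.
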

\begin{proof} We first show the sufficiency and suppose that $\rho \left ( A_{s}\right )< 1$.
Assume that the Jordan decomposition of $A_{s}$ is
$$A_{s}=P\text{diag}\left ( J_{1},J_{2},\cdots ,J_{t}\right )P^{-1},$$
where $J_{1},J_{2},\cdots ,J_{t}$ are Jordan blocks that belong to the eigenvalues $\lambda _{1},\lambda _{2},\cdots ,\lambda _{t}$, respectively. Denote $X=P^{-1}A_{s}P$ and $Y=P^{-1}A_{d}P$, then we have
\begin{equation}\label{dmeq1}
    \begin{aligned}
         P^{-1}A^{k}P&=\left (P^{-1}AP \right )^{k}=\left ( P^{-1}A_{s}P+P^{-1}A_{d}P\varepsilon \right )^{k}=\left ( X+Y\varepsilon \right )^{k}\\&=X^{k}+\left ( YX^{k-1}+XYX^{k-2}+X^2YX^{k-3}+\cdots +X^{k-1}Y\right )\varepsilon,
    \end{aligned}
\end{equation}
where the last equality holds due to $\varepsilon^2=0$.

We will consider the matrices in the form of $YX^{k-1}, XYX^{k-2}, X^2YX^{k-3},\cdots, X^{k-1}Y$ in (\ref{dmeq1}).
Denote $J_n(\lambda)$ as Jordan block belonging to the eigenvalue $\lambda$, i.e.,
$$
J_n(\lambda)=\begin{bmatrix}
\lambda & 1 & 0 & \cdots & 0 &0\\
0 & \lambda & 1 & \cdots & 0 &0\\
\vdots &\vdots & \vdots & \ddots & \vdots &\vdots\\
0 & 0 &0& \cdots & \lambda   &1\\
0 & 0 &0& \cdots & 0 & \lambda
\end{bmatrix}.
$$
Then, applying  the binomial theorem, for $k\in\mathbb{N}$ we have
\begin{equation}\label{dmeq2}
(J_n(\lambda))^k=\begin{bmatrix}
\lambda^k & k\lambda^{k-1} & \cdots & \mathrm{C}_k^{n-1}\lambda^{k-n+1}\\
0 & \lambda^k & \cdots & \mathrm{C}_k^{n-2}\lambda^{k-n+2}\\
\vdots &\vdots &  \ddots & \vdots \\
0 & 0 &\cdots &  \lambda^k
\end{bmatrix}.
\end{equation}
Denote $\rho \left ( A_{s}\right )=\lambda$ and $\alpha=\underset{1\le i,j\le n}{\max} \left \{ |y_{ij}| \right \}$ where $y_{ij}$ is the $(i,j)$-th element of $Y$. Define an $n\times n$ matrix $M$ by
$$
M=\begin{bmatrix}
    \alpha& \alpha &\cdots  &\alpha \\
    \alpha& \alpha &\cdots  &\alpha \\
     \vdots &\vdots &\cdots  &\vdots  \\
   \alpha& \alpha &\cdots  &\alpha
    \end{bmatrix}.
$$
Then, for any nonnegative integer $p,q$, it follows from (\ref{dmeq2}) that
\begin{equation*}
\begin{aligned}
M(J_n(\lambda))^q&=\begin{bmatrix}
    \alpha& \alpha &\cdots  &\alpha \\
    \alpha& \alpha &\cdots  &\alpha \\
     \vdots &\vdots &\cdots  &\vdots  \\
   \alpha& \alpha &\cdots  &\alpha
    \end{bmatrix}\begin{bmatrix}
    \lambda^q & q\lambda^{q-1} & \cdots & \mathrm{C}_q^{n-1}\lambda^{q-n+1}\\
0 & \lambda^q & \cdots & \mathrm{C}_q^{n-2}\lambda^{q-n+2}\\
\vdots &\vdots &  \ddots & \vdots \\
0 & 0 &\cdots &  \lambda^q
    \end{bmatrix}\\
 &=\begin{bmatrix}
     \alpha\lambda^{q}& \alpha\left(\lambda^q+q\lambda^{q-1}\right)  & \cdots  & \alpha\left (\lambda^q+q\lambda^{q-1}+\cdots+\mathrm{C}_q^{n-1}\lambda^{q-n+1}\right ) \\
     \vdots  & \vdots & \ddots  & \vdots \\
    \alpha\lambda^{q}& \alpha\left(\lambda^q+q\lambda^{q-1}\right)  & \cdots  & \alpha\left (\lambda^q+q\lambda^{q-1}+\cdots+\mathrm{C}_q^{n-1}\lambda^{q-n+1}\right )
    \end{bmatrix}
\end{aligned}
\end{equation*}
Hence, by direct computation, we have
\begin{equation}\label{dmeq3}
\|(J_n(\lambda))^pM(J_n(\lambda))^q\|_1=\alpha\theta_p\theta_q,
\end{equation}
where
\begin{equation*}
\begin{aligned}
\theta_q&=\lambda^{q}+\left(\lambda^q+q\lambda^{q-1}\right)+\cdots+\left (\lambda^q+q\lambda^{q-1}+\cdots+\mathrm{C}_q^{n-1}\lambda^{q-n+1}\right ), \\
\theta_p&=\lambda^p+\left(\lambda^p+p\lambda^{p-1}\right)+\cdots+ (\lambda^p+ p\lambda^{p-1}+\cdots+\mathrm{C}_p^{n-1}\lambda^{p-n+1}).
\end{aligned}
\end{equation*}
Since $\lambda<1$, we have
$$
\theta_q=n\lambda^{q}+(n-1)q\lambda^{q-1}+\cdots+2\mathrm{C}_q^{n-2}\lambda^{q-n+2}+\mathrm{C}_q^{n-1}\lambda^{q-n+1}\le n^2q^{n-1}\lambda^{q-n+1},
$$
which together with (\ref{dmeq3}) implies that
\begin{equation}\label{dmeq4}
\left\|X^{p}YX^{q}\right\|_1\leq \|(J_n(\lambda))^pM(J_n(\lambda))^q\|_1\le \alpha n^4(pq)^{n-1}\lambda^{p+q-2n+2}.
\end{equation}
It follows from (\ref{dmeq4}) with $p+q=k-1$ that
\begin{equation*}
\begin{aligned}
\left\| YX^{k-1}+XYX^{k-2}+\cdots +X^{k-1}Y\right \|_1&\leq \sum_{p=0}^{k-1}\alpha n^{4}\lambda^{-2n+1}p^{n-1}\left ( k-1-p\right )^{n-1}\lambda^{k}\\
    & \leq \alpha n^{4}\lambda ^{-2n+1}k^{2n}\lambda^{k},
    \end{aligned}
\end{equation*}
which together with
$$
\lim_{k\to\infty}k^{2n}\lambda^{k}=0,
$$
implies that the matrix $YX^{k-1}+XYX^{k-2}+\cdots +X^{k-1}Y$ converges to $O$ as $k\to\infty$. Since $\lambda<1$, by (\ref{realmeq1}), we have
$$
\lim_{k\to\infty}X^k=O,
$$
which together with (\ref{dmeq1}) yields that $A^{k}$ converges to $O$ as $k\to\infty$.

The necessity is obvious. In fact, if $A^{k}$ converges to $O$, then $A_{s}^{k}$ must converge to $O$, which together with (\ref{realmeq1}) means $\rho \left ( A_{s}\right )< 1$.
\end{proof}

\section{Convergence Analysis for the Collatz Method}\label{Colmet}

In this section, we provide the convergence analysis for the Collatz method given in \cite[Algorithm 1]{qcnew} for computing Perron eigenpair or Perron-Frobenius eigenpair of a dual number matrix with primitive or irreducible nonnegative standard part. As mentioned above, an irreducible nonnegative dual number matrix can be shifted to a primitive one via a positive parameter, so we only discuss the convergence of the Collatz method with respect to primitive dual number matrices.

Based on Theorem \ref{lemma2}, the Collatz method for computing Perron eigenpair of a primitive dual number matrix $A=A_{s}+A_{d}\varepsilon$ presented in \cite{qcnew} works as follows: choose
an arbitrary dual number vector $\mathbf x^{0}$ with a positive standard part, and let $\mathbf y^{0}=A\mathbf x^{0}$. For $k=0,1,2,\ldots $, compute
\begin{equation}\label{collatz1}
\mathbf x^{k+1}=\dfrac{\mathbf y^{k}}{\left \| \mathbf y^{k}\right \|_{2}},\qquad \mathbf y^{k+1}=A\mathbf x^{k+1},
\end{equation}
and
\begin{equation}\label{collatz2}
\underline{\lambda }_{k}=\underset{i}{\min}\dfrac{(A\mathbf{x}^k)_{i}}{x^k_{i}}, \qquad \overline{\lambda }_{k}=\underset{i}{\max}\dfrac{(A\mathbf{x}^k)_{i}}{x^k_{i}}.
\end{equation}

It was shown in \cite[Theorem 6.2]{qcnew} that the dual number sequences $\{\underline{\lambda}_{k}\}$ and $\{\overline{\lambda}_{k}\}$ generated by the Collatz method (\ref{collatz1}) and (\ref{collatz2})  satisfy
\begin{equation*}
\underline{\lambda }_{0}\leq \underline{\lambda }_{1}\leq \cdots \leq \underline{\lambda}_{k}\le \underline{\lambda}_{k+1}\le \cdots\le \lambda \leq \cdots\leq \overline{\lambda}_{k+1} \le \overline{\lambda}_{k}\le\cdots \leq \overline{\lambda }_{1}\leq \underline{\lambda }_{0},
\end{equation*}
where $\lambda$ is the Perron eigenvalue of $A$.

We wish to get the conclusion that both $\{\underline{\lambda}_{k}\}$ and $\{\overline{\lambda}_{k}\}$ converge to the same dual number $\lambda$. Some convergence and convergence rate results were observed
numerically in \cite{qcnew}. However, there was without theoretical proofs. Qi and Cui only proved that the standard parts of these two dual number sequences linearly converge to the same number, i.e.,
$\lim\limits_{k\to\infty}(\overline{\lambda}_{s,k}-\underline{\lambda}_{s,k})=0$ in
\cite[Theorem 6.3]{qcnew}. So, they proposed the second conjecture in \cite{qcnew} as follows:
\begin{conjecture}\label{conj2}
What is the condition such that both $\{\underline{\lambda}_{k}\}$ and $\{\overline{\lambda}_{k}\}$ converge to the same dual number $\lambda$ and what
is the convergence rate?
\end{conjecture}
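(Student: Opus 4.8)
The plan is to prove that, for any primitive dual number matrix $A$ and any appreciable starting vector $\mathbf{x}^0$ with $\mathbf{x}_s^0>\mathbf{0}$, both $\{\underline{\lambda}_k\}$ and $\{\overline{\lambda}_k\}$ converge to the Perron eigenvalue $\lambda=\lambda_s+\lambda_d\varepsilon$ R-linearly, with an explicit rate governed by $\rho_2/\lambda_s$, where $\rho_2=\max\{|\mu|:\mu\text{ is an eigenvalue of }A_s,\ \mu\neq\lambda_s\}<\lambda_s$ by primitivity. This already answers the \emph{condition} part of the conjecture: no extra hypothesis beyond primitivity is needed. Since the classical power method and \cite[Theorem 6.3]{qcnew} already yield convergence of the standard parts $\underline{\lambda}_{s,k},\overline{\lambda}_{s,k}\to\lambda_s$, the essential new task is to control the dual parts and to pin down the rate for both parts simultaneously.

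First I would exploit the scale-invariance of the Collatz ratios. Each iterate satisfies $\mathbf{x}^k=c_kA^k\mathbf{x}^0$ for an appreciable dual scalar $c_k$ (the accumulated reciprocal norms $\prod_{j<k}1/\|\mathbf{y}^j\|_2$, appreciable because every $\mathbf{y}_s^j=A_s\mathbf{x}_s^j\neq\mathbf{0}$), so $c_k$ cancels in (\ref{collatz2}) and $\underline{\lambda}_k=\min_i(A^{k+1}\mathbf{x}^0)_i/(A^k\mathbf{x}^0)_i$, $\overline{\lambda}_k=\max_i(A^{k+1}\mathbf{x}^0)_i/(A^k\mathbf{x}^0)_i$. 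The analysis thus reduces to the asymptotics of $\mathbf{w}^k:=A^k\mathbf{x}^0$. Using the expansion (\ref{dmeq1}) together with the spectral decomposition $A_s=\lambda_s P+Q$, where $P=\mathbf{x}_s\mathbf{y}_s^T/(\mathbf{y}_s^T\mathbf{x}_s)$ is the projector onto the simple Perron eigenspace, $PQ=QP=O$, and $\rho(Q)=\rho_2$, I would write $A_s^k=\lambda_s^kP+Q^k$ and split $\mathbf{w}^k$ into a Perron part and a remainder. The standard part is $\mathbf{w}_s^k=c_0\lambda_s^k\mathbf{x}_s+Q^k\mathbf{x}_s^0$ with $c_0=\mathbf{y}_s^T\mathbf{x}_s^0/\mathbf{y}_s^T\mathbf{x}_s>0$; substituting the decomposition into the double sum $\sum_{j=0}^{k-1}A_s^jA_dA_s^{k-1-j}\mathbf{x}_s^0$, the pure-$P$ term collapses to $k\lambda_s^{k-1}PA_dP\mathbf{x}_s^0=k\lambda_s^{k-1}c_0\lambda_d\mathbf{x}_s$ (reproducing (\ref{eq5})), plus terms of order $\lambda_s^{k-1}$ and $\rho_2^k$.

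The decisive step is that, although the dual part of $\mathbf{w}^k$ grows like $k\lambda_s^{k-1}$, this growth cancels under the dual division of Definition \ref{def2}(iii). Writing $\mathbf{w}_{d,i}^k=S_k\beta_k+e_i^k$ with $S_k=c_0\lambda_s^k(\mathbf{x}_s)_i$, $\beta_k$ affine in $k$ with $\beta_{k+1}-\beta_k=\lambda_d/\lambda_s$, and $e_i^k=\mathcal{O}(k^m\rho_2^k)$ ($m$ fixed by the largest Jordan block of $Q$), the dual part of $(\mathbf{w}^{k+1})_i/(\mathbf{w}^k)_i$ equals $\lambda_s(\beta_{k+1}-\beta_k)=\lambda_d$ up to the decaying corrections. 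Every term scaling geometrically with ratio $\lambda_s$—including the $\lambda_s^kd_0$ piece coming from $\mathbf{x}_d^0$ and the convolution-type mixed terms—drops out, since the quotient is sensitive only to deviations from $\lambda_s$-scaling. The residual is then controlled by the $Q$-contributions: applying the Jordan estimate (\ref{dmeq4}) to $Q$ (with $\rho(Q)=\rho_2$) in place of $X$ bounds each mixed summand and its tail by a constant times $k^m(\rho_2/\lambda_s)^k\lambda_s^{k-1}$, so that after the at-most-linear amplification by $\beta_k$ both $\overline{\lambda}_k-\lambda$ and $\lambda-\underline{\lambda}_k$ are bounded, in standard and dual parts, by $C\,k^{m+1}(\rho_2/\lambda_s)^k$, i.e. R-linear convergence with rate $\rho_2/\lambda_s$.

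The main obstacle, I expect, is the bookkeeping that separates exactly which portions of the dual component cancel in the division and which survive as error. The factor $k$ in the dual part is a genuine trap: a crude estimate would suggest only $\mathcal{O}(1/k)$ accuracy, and it is precisely the cancellation of all $\lambda_s$-geometric terms in the dual-number quotient that restores the linear rate. Making this rigorous requires two checks: first, that $\mathbf{w}_{s,i}^k$ stays appreciable uniformly in $k$ so that Definition \ref{def2}(iii) applies at every step, which holds because $\mathbf{w}_s^k/\|\mathbf{w}_s^k\|_2\to\mathbf{x}_s/\|\mathbf{x}_s\|_2>\mathbf{0}$; and second, summing the convolutions $\sum_{j=0}^{k-1}\lambda_s^{-j}Q^jA_dQ^{k-1-j}\mathbf{x}_s^0$ and their tails, for which the polynomial-times-geometric bound (\ref{dmeq4}) established in the proof of Theorem \ref{thm2} is exactly the required tool. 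Once the rate is secured for the componentwise ratios, passing to $\min_i$ and $\max_i$ is immediate, and the sandwich $\underline{\lambda}_k\le\lambda\le\overline{\lambda}_k$ delivers the stated R-linear convergence.
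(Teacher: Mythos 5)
Your proposal is correct in substance and reaches the same conclusion as the paper's Theorem \ref{thm4}, but it routes around the central difficulty differently. Both arguments reduce the Collatz bounds \eqref{collatz2} to componentwise ratios of $A^{k+1}\mathbf{x}^0$ over $A^k\mathbf{x}^0$, and both lean on the polynomial-times-geometric Jordan estimates from the proof of Theorem \ref{thm2}: your use of \eqref{dmeq4} applied to $Q$ and to the convolution tails is exactly what the paper does in \eqref{eqk1}--\eqref{eqk3}. The genuine divergence is in how the $k\lambda_s^{k-1}P A_d P$ term --- the linear-in-$k$ growth of the dual part of $A^k\mathbf{x}^0$ --- is neutralized. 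The paper normalizes to $\lambda^{-1}A$ at the outset; in the Jordan basis the dual eigenvalue of the normalized matrix is $b_{11}=0$, so the growing term vanishes identically, $(\lambda^{-1}A)^k$ converges to $\mathbf{x}_s\mathbf{y}_s^T+C\varepsilon$, and a cleanly isolated quotient-perturbation lemma (Lemma \ref{thm3}) converts the $O(\eta^k)$ remainder into the rate. You instead keep $A$ unnormalized and cancel the growth inside the dual-number division of Definition \ref{def2}(iii), using that the quotient only detects deviations from exact $\lambda_s$-scaling; this is sound (with $\mathbf{w}^k_{d,i}=S_k\beta_k^{(i)}+e_i^k$ and $\beta^{(i)}_{k+1}-\beta^{(i)}_k=\lambda_d/\lambda_s$, the quotient's dual part is $\lambda_d$ plus decaying corrections), but it forces the bookkeeping you anticipate: the constant term of $\beta_k^{(i)}$ must be allowed to depend on $i$, since the convergent convolution sums such as $\lambda_s^{k-1}(I-Q/\lambda_s)^{-1}A_dP\mathbf{x}_s^0$ are not proportional to $\mathbf{x}_s$, and the extra factor of $k$ entering through $a_sb_d/b_s^2$ must be tracked explicitly. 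What each approach buys: the paper's normalization makes the quotient analysis a one-shot lemma at the price of a square root in the advertised rate ($\eta=\sqrt{\sigma}$, used to swallow the polynomial factors), whereas your version retains the sharper rate $\rho_2/\lambda_s$ up to an explicit polynomial factor and accommodates a slightly more general initial vector ($\mathbf{x}^0_s>\mathbf{0}$ with arbitrary dual part) rather than the paper's $\mathbf{x}^0\in\mathbb{R}^n_{++}$.
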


We give a positive answer for Conjecture \ref{conj2} and show that both $\{\underline{\lambda}_{k}\}$ and $\{\overline{\lambda}_{k}\}$ generated  by the Collatz method (\ref{collatz1}) and (\ref{collatz2}) converge to the Perron eigenvalue $\lambda$ of the given primitive dual number matrix $A$ with the rate of $R$-linear convergence.

For this purpose, we need the following lemma.
\begin{lemma}\label{thm3}
Assume that the real numbers $\gamma>0$, $0<\eta<1$, $L>0$,  and $\lambda =\lambda_{s}+\lambda _{d}\varepsilon \in \mathbb{D}$ with $\lambda_{s}>0$, then for  sufficiently large $k$ and any
$t_i\in\mathbb{R}$ with $|t_i|\le L$ $(i=1,2,3,4)$, we have the absolute values of the standard part and the dual part of the following dual number
\begin{equation*}
\underset{\left |t_{i} \right |\leq L }{\max } \lambda  \frac{\gamma+t_{1}\eta^{k}+t_{2}\eta^{k}\varepsilon}{\gamma+t_{3}\eta^{k}+t_{4}\eta^{k}\varepsilon}
-\underset{\left |t_{i} \right |\leq L }{\min } \lambda  \frac{\gamma+t_{1}\eta^{k}+t_{2}\eta^{k}\varepsilon}{\gamma+t_{3}\eta^{k}+t_{4}\eta^{k}\varepsilon}
\end{equation*}
are no more than $\dfrac{8L\left ( 2\lambda _{s}+\left | \lambda _{d}\right |\right )}{\gamma}\eta^{k}$.
\end{lemma}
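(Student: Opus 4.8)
The plan is to make the dual fraction explicit, reduce the dual-valued maximum and minimum to real estimates anchored at the point where all perturbations vanish, and then finish with the triangle inequality. First I would fix $k$ large enough that $\gamma-L\eta^{k}\ge \gamma/2>0$; then for every admissible tuple with $|t_i|\le L$ the standard parts $\gamma+t_1\eta^k$ and $\gamma+t_3\eta^k$ of numerator and denominator are positive, so the division is governed by Definition~\ref{def2}(iii). Carrying out that division and multiplying by $\lambda=\lambda_s+\lambda_d\varepsilon$, I would write the inner dual number as
\begin{equation*}
g(t_1,t_2,t_3,t_4)=\lambda_s r+\left(\lambda_s s+\lambda_d r\right)\varepsilon,\qquad r=\frac{\gamma+t_1\eta^k}{\gamma+t_3\eta^k},\quad s=\frac{t_2\eta^k}{\gamma+t_3\eta^k}-\frac{(\gamma+t_1\eta^k)\,t_4\eta^k}{(\gamma+t_3\eta^k)^2}.
\end{equation*}
The key observation is that at $t=0$ one has $r=1$ and $s=0$, so $g(0)=\lambda$; the maximum and minimum in the lemma are therefore perturbations of $\lambda$, and it suffices to control how far $g$ can move away from $\lambda$.

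Next I would establish uniform estimates valid for all admissible $t$. For sufficiently large $k$ the factors $\gamma\pm L\eta^k$ lie between $\gamma/2$ and $2\gamma$ and $\gamma^2-L^2\eta^{2k}\ge\gamma^2/2$, so a direct bound gives $|r-1|\le \frac{4L\eta^k}{\gamma}$ and $|s|\le \frac{4L\eta^k}{\gamma}$ (the threshold on $k$ being chosen large enough to absorb the lower-order factors into these constants). Consequently, writing $g_s,g_d$ for the standard and dual parts of $g$, for every admissible $t$,
\begin{equation*}
\left|g_s(t)-\lambda_s\right|=\lambda_s|r-1|\le \frac{4L\lambda_s}{\gamma}\eta^k,\qquad \left|g_d(t)-\lambda_d\right|=\left|\lambda_s s+\lambda_d(r-1)\right|\le \frac{4L(\lambda_s+|\lambda_d|)}{\gamma}\eta^k.
\end{equation*}

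Finally I would note that, because the dual order compares standard parts first and dual parts second, the dual-valued maximum and minimum over the compact box $[-L,L]^4$ are attained at admissible tuples $t^\ast$ and $t_\ast$: maximize the continuous function $g_s$, then maximize $g_d$ on the closed set of maximizers, and symmetrically for the minimum. Hence the standard part of the difference equals $g_s(t^\ast)-g_s(t_\ast)$ and its dual part equals $g_d(t^\ast)-g_d(t_\ast)$, and routing each difference through $\lambda$ with the triangle inequality doubles the uniform bounds above:
\begin{equation*}
\left|g_s(t^\ast)-g_s(t_\ast)\right|\le \frac{8L\lambda_s}{\gamma}\eta^k,\qquad \left|g_d(t^\ast)-g_d(t_\ast)\right|\le \frac{8L(\lambda_s+|\lambda_d|)}{\gamma}\eta^k.
\end{equation*}
Since both right-hand sides are dominated by $\frac{8L(2\lambda_s+|\lambda_d|)}{\gamma}\eta^k$, the claimed bounds on the standard and dual parts follow.

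I expect the main obstacle to be the correct handling of the dual-order extrema rather than any single estimate: one must justify that the maximum and minimum are genuinely attained at admissible points under the lexicographic-type order of Definition~\ref{def2}(i), so that their standard and dual parts are exactly $g_s,g_d$ evaluated at those points. Once this is in place the remaining work is routine—expanding the dual fraction correctly and choosing the threshold for ``sufficiently large $k$'' generously enough that the crude constants $4$ and $8$ absorb the $1+o(1)$ factors and stay inside the stated bound $\frac{8L(2\lambda_s+|\lambda_d|)}{\gamma}\eta^k$.
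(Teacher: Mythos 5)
Your proposal is correct and follows essentially the same route as the paper: expand the dual division explicitly via Definition~\ref{def2}(iii), bound the deviation of the standard and dual parts from $\lambda_s$ and $\lambda_d$ uniformly by $O(\eta^k/\gamma)$ quantities, and double these by the triangle inequality to bound the difference of the extrema. Your extra care in justifying that the lexicographic max and min are attained (maximize $g_s$ on the compact box, then $g_d$ on the closed set of maximizers) is a point the paper's proof passes over silently, and your slightly tighter constant $\lambda_s+|\lambda_d|$ in place of $2\lambda_s+|\lambda_d|$ still sits inside the stated bound, so the argument goes through.
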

\begin{proof}
Since $\gamma>0$, $0<\eta<1$ and $\left |t_{3} \right |\leq L$, it holds for sufficiently large $k$ that \begin{equation}\label{dm1}
\gamma+t_{3}\eta^{k}>\dfrac{\gamma}{2}>0.
\end{equation}
Define
$$
\theta=\dfrac{\gamma+t_{1}\eta^{k}+t_{2}\eta^{k}\varepsilon}{\gamma+t_{3}\eta^{k}+t_{4}\eta^{k}\varepsilon}.
$$
By (\ref{dm1}), Definition \ref{def1} (ii) and Definition \ref{def2} (iii), the dual number $\lambda\theta$ can be written as
\begin{equation*}
\lambda \theta
=\frac{\lambda _{s}\left (\gamma+t_{1}\eta^{k} \right )}{\gamma+t_{3}\eta^{k}}+\left (\dfrac{\left (\gamma+ t_{1}\eta^{k}\right )\lambda _{d}+t_{2}\eta^{k}\lambda _{s}}{\gamma+t_{3}\eta^{k}} - \dfrac{\lambda _{s}\left (\gamma+t_{1}\eta^{k} \right )t_{4}\eta^{k}}{\left ( \gamma+t_{3}\eta^{k}\right )^{2}}\right )\varepsilon.
\end{equation*}
Hence, the maximal value  and the minimal value of the standard part of the dual number $\lambda\theta$ over the set $\Omega=\{t_i\in\mathbb{R}: ~|t_i|\le L, i=1,2,3,4\}$ are
\begin{equation}\label{dmmax}
\max_{t_i\in\Omega}  \dfrac{\lambda _{s}\left (\gamma+t_{1}\eta^{k} \right )}{\gamma+t_{3}\eta^{k}}=
\lambda _{s}+\max_{t_i\in\Omega} \dfrac{\left (t_{1}\eta^{k}-t_{3}\eta^{k} \right )\lambda _{s}}{\gamma+t_{3}\eta^{k}}\leq \lambda _{s}+\dfrac{4L\lambda _{s}}{\gamma}\eta^{k},
\end{equation}
and
\begin{equation}\label{dmmin}
\min_{t_i\in\Omega}  \dfrac{\lambda _{s}\left (\gamma+t_{1}\eta^{k} \right )}{\gamma+t_{3}\eta^{k}}=
\lambda _{s}+\min_{t_i\in\Omega} \dfrac{\left (t_{1}\eta^{k}-t_{3}\eta^{k} \right )\lambda _{s}}{\gamma+t_{3}\eta^{k}}\ge \lambda _{s}-\dfrac{4L\lambda _{s}}{\gamma}\eta^{k}.
\end{equation}
On the other hand, denote $(\lambda\theta)_d$ as the dual part of the dual number $\lambda\theta$, i.e.,
$$(\lambda\theta)_d=\dfrac{\left (\gamma+ t_{1}\eta^{k}\right )\lambda _{d}+t_{2}\eta^{k}\lambda _{s}}{\gamma+t_{3}\eta^{k}} - \dfrac{\lambda _{s}\left (\gamma+t_{1}\eta^{k} \right )t_{4}\eta^{k}}{\left ( \gamma+t_{3}\eta^{k}\right )^{2}}.$$
Hence, its maximal and minimal values over the set $\Omega$ are
\begin{equation}\label{dualmax}
\max_{t_i\in\Omega}(\lambda\theta)_d\leq \lambda _{d}+\dfrac{2L\left ( \lambda _{s}+2\left | \lambda _{d}\right |\right )}{\gamma}\eta^{k}+\dfrac{6L\lambda _{s}}{\gamma}\eta^{k}=\lambda _{d}+\dfrac{4L\left ( 2\lambda _{s}+\left | \lambda _{d}\right |\right )}{\gamma}\eta^{k},
\end{equation}
and
\begin{equation}\label{dualmin}
\min_{t_i\in\Omega}(\lambda\theta)_d\geq \lambda _{d}-\dfrac{2L\left ( \lambda _{s}+2\left | \lambda _{d}\right |\right )}{\gamma}\eta^{k}-\dfrac{6L\lambda _{s}}{\gamma}\eta^{k}=\lambda _{d}-\dfrac{4L\left ( 2\lambda _{s}+\left | \lambda _{d}\right |\right )}{\gamma}\eta^{k}.
\end{equation}
Combining (\ref{dmmax}), (\ref{dmmin}), (\ref{dualmax}) and (\ref{dualmin}), we obtain that the absolute values of the standard part and the dual part of
\begin{equation*}
\underset{\left |t_{i} \right |\leq L }{\max } \lambda  \frac{\gamma+t_{1}\eta^{k}+t_{2}\eta^{k}\varepsilon}{\gamma+t_{3}\eta^{k}+t_{4}\eta^{k}\varepsilon}
-\underset{\left |t_{i} \right |\leq L }{\min } \lambda  \frac{\gamma+t_{1}\eta^{k}+t_{2}\eta^{k}\varepsilon}{\gamma+t_{3}\eta^{k}+t_{4}\eta^{k}\varepsilon}
\end{equation*}
are no more than $\dfrac{8L\left ( 2\lambda _{s}+\left | \lambda _{d}\right |\right)}{\gamma}\eta^{k}$ for sufficiently large $k$.
\end{proof}

We give the main result which shows that the Collatz method given in \cite[Algorithm 1]{qcnew} has linear convergence.
\begin{theorem}\label{thm4}
Suppose that $A=A_{s}+A_{d}\varepsilon$ is an $n\times n$ primitive dual number matrix and its Perron eigenvalue is $\lambda=\lambda_{s}+\lambda_{d}\varepsilon\in\mathbb{D}_{++}$. Let $\left \{\underline{ \lambda}_{k} \right \}$ and $\left \{\overline{ \lambda}_{k} \right \}$ be the dual number sequences generated  by the Collatz method (\ref{collatz1}) and (\ref{collatz2}) from  an arbitrary initial iterative point $\mathbf x^{0}\in\mathbb{R}_{++}^n$. Then, we have
\begin{equation*}
\lim_{k\rightarrow \infty }\left ( \overline{ \lambda}_{k}-\underline{ \lambda}_{k}\right )=0.
\end{equation*}
Moreover, the convergence rate of the dual number sequence $\left \{  \overline{ \lambda}_{k}-\underline{ \lambda}_{k}  \right \}$ is $R$-linear, i.e., there exist constants $\tilde{\Delta}>0$ and $\eta\in (0,1)$ such that the absolute values of the standard part and the dual part of
$\overline{ \lambda}_{k}-\underline{ \lambda}_{k}$ are no more than $\tilde{\Delta}\eta^k$
for sufficiently large $k$.
\end{theorem}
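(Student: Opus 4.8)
The plan is to reduce the dual-number quantity $\overline{\lambda}_k-\underline{\lambda}_k$ to the exact template handled by Lemma \ref{thm3}. First I would note that the normalization in (\ref{collatz1}) only multiplies the iterate by an appreciable dual scalar: writing $\mathbf{x}^k=s_k A^k\mathbf{x}^0$ with $s_k\in\mathbb{D}$ appreciable, the Rayleigh-type quotients in (\ref{collatz2}) are scale invariant, so that
\begin{equation*}
\frac{(A\mathbf{x}^k)_i}{x^k_i}=\frac{(A^{k+1}\mathbf{x}^0)_i}{(A^k\mathbf{x}^0)_i},\qquad \overline{\lambda}_k-\underline{\lambda}_k=\max_i\frac{(A^{k+1}\mathbf{x}^0)_i}{(A^k\mathbf{x}^0)_i}-\min_i\frac{(A^{k+1}\mathbf{x}^0)_i}{(A^k\mathbf{x}^0)_i}.
\end{equation*}
Since $A_s$ is primitive and $\mathbf{x}_s^0>0$, the standard parts $(A_s^k\mathbf{x}_s^0)_i$ are positive for all large $k$, so each quotient is a well-defined dual number and everything rests on the asymptotics of $A^k\mathbf{x}^0$.

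Next I would import the expansion $A^k=A_s^k+\bigl(\sum_{j=0}^{k-1}A_s^jA_dA_s^{k-1-j}\bigr)\varepsilon$ already used in (\ref{dmeq1}) and combine it with the Perron--Frobenius asymptotics of the primitive matrix $A_s$. Writing $\mathbf{y}_s$ for the left Perron vector normalized by $\mathbf{y}_s^T\mathbf{x}_s=1$, $\gamma:=\mathbf{y}_s^T\mathbf{x}_s^0>0$, and $\eta_0:=|\mu_2|/\lambda_s<1$ where $|\mu_2|$ is the largest modulus among the non-Perron eigenvalues of $A_s$, one has $A_s^m\mathbf{u}=\lambda_s^m(\mathbf{y}_s^T\mathbf{u})\mathbf{x}_s+O\bigl((\lambda_s\eta_0)^m\bigr)$. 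Feeding this into the two pieces of $A^k\mathbf{x}^0$, and using the crucial identity $\mathbf{y}_s^TA_d\mathbf{x}_s=\lambda_d$ from (\ref{eq5}), I expect the standard part of $(A^k\mathbf{x}^0)_i$ to be $\lambda_s^k\gamma(\mathbf{x}_s)_i(1+O(\eta_0^k))$, while the dual part carries an extra factor $k$,
\begin{equation*}
(A^k\mathbf{x}^0)_{d,i}=k\,\lambda_s^{k-1}\lambda_d\gamma(\mathbf{x}_s)_i+\lambda_s^{k-1}\bigl(\kappa_i+O(\eta_0^k)\bigr),
\end{equation*}
the $k$-term arising by summing the $k$ copies of $\lambda_d$ produced by $A_s^jA_dA_s^{k-1-j}$, and $\kappa_i$ being a constant coming from convergent geometric residuals. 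This mirrors the exact relation $A^k\mathbf{x}=\lambda^k\mathbf{x}$, whose dual part $\lambda_s^k\mathbf{x}_d+k\lambda_s^{k-1}\lambda_d\mathbf{x}_s$ shows the same linear-in-$k$ growth.

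I would then form the quotient and factor out $\lambda$. The decisive point is that the numerator $(A^{k+1}\mathbf{x}^0)_i$ and the product $\lambda(A^k\mathbf{x}^0)_i$ share the same leading standard part $\lambda_s^{k+1}\gamma(\mathbf{x}_s)_i$ and the same leading dual part $\lambda_s^k\bigl((k+1)\lambda_d\gamma(\mathbf{x}_s)_i+\kappa_i\bigr)$, including the linear-in-$k$ piece, so that after cancellation
\begin{equation*}
\frac{(A^{k+1}\mathbf{x}^0)_i}{(A^k\mathbf{x}^0)_i}=\lambda\,\frac{\gamma_i+t_1^{(i)}\eta^k+t_2^{(i)}\eta^k\varepsilon}{\gamma_i+t_3^{(i)}\eta^k+t_4^{(i)}\eta^k\varepsilon},\qquad \gamma_i:=\gamma(\mathbf{x}_s)_i,
\end{equation*}
with coefficients $t^{(i)}_j$ bounded by some $L$ uniformly in $i$, for any fixed $\eta\in(\eta_0,1)$. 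The one delicate point here is that the perturbation surviving in the dual part is of order $k\eta_0^k$ rather than $\eta_0^k$; this is absorbed by enlarging the rate to $\eta$, since $k(\eta_0/\eta)^k\to0$ keeps the $t^{(i)}_j$ bounded. With the quotients in exactly the form required by Lemma \ref{thm3}, applying that lemma coordinatewise (with $\gamma=\min_i\gamma_i>0$) bounds the standard and dual parts of $\overline{\lambda}_k-\underline{\lambda}_k$ by $\tilde{\Delta}\eta^k$, giving both $\overline{\lambda}_k-\underline{\lambda}_k\to0$ and the claimed $R$-linear rate.

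The main obstacle is the exact cancellation of the linear-in-$k$ dual terms between $(A^{k+1}\mathbf{x}^0)_i$ and $\lambda(A^k\mathbf{x}^0)_i$. Verifying it forces one to pin down not merely the order but the precise leading coefficient of the dual part of $\sum_{j=0}^{k-1}A_s^jA_dA_s^{k-1-j}\mathbf{x}_s^0$, which is exactly where the eigenvalue formula (\ref{eq5}) for $\lambda_d$ is indispensable, and then to show that the remaining residual decays geometrically so that, after the extra factor $k$ is absorbed into $\eta$, it fits the bounded-coefficient template of Lemma \ref{thm3}. Controlling those residuals, namely the double sum over $j$ of products of subdominant contributions, is the heaviest bookkeeping and the step most likely to hide constants; it is essentially the same estimate already carried out in the proof of Theorem \ref{thm2}.
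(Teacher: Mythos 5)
Your proposal is correct and follows essentially the same route as the paper: reduce the Collatz quotients to the template of Lemma \ref{thm3} by expanding the powers of $A$, using the Perron asymptotics of $A_s$ together with the identity (\ref{eq5}) for $\lambda_d$, and controlling the subdominant residuals geometrically as in Theorem \ref{thm2}. The only (cosmetic) difference is that the paper normalizes to $\lambda^{-1}A$ at the outset, so that $(\lambda^{-1}A)^k\to \mathbf x_s\mathbf y_s^T+C\varepsilon$ and no linear-in-$k$ term ever appears, whereas you keep $A^k$, exhibit the $k\lambda_s^{k-1}\lambda_d\gamma\,\mathbf x_s$ term explicitly, and cancel it against $\lambda\,(A^k\mathbf x^0)_i$ --- the same computation organized differently.
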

\begin{proof}Since $A_s$ is a primitive real matrix and $\lambda_s$ is its Perron eigenvalue, we can assume that its Jordan decomposition is
$$A_{s}=P\text{diag}\left (\lambda_s,J_{1},J_{2},\cdots ,J_{t}\right )P^{-1},$$
where $J_{1},J_{2},\cdots ,J_{t}$ are Jordan blocks that belong to the eigenvalues $\lambda _{1},\lambda _{2},\cdots ,\lambda _{t}$, respectively.
Denote $B=P^{-1}A_{d}P=(b_{ij})$ and
$$\mathbf x_{s}=P
\begin{bmatrix}
    1\\
    0\\
    \vdots \\
    0
    \end{bmatrix},\qquad
\mathbf y_{s}=P^{-T}
\begin{bmatrix}
    1\\
    0\\
    \vdots \\
    0
\end{bmatrix}.$$
Then, we have
$$A_{s}\mathbf x_{s}=\lambda _{s}\mathbf x_{s},\quad \mathbf y_{s}^{T}A_{s}=\lambda _{s}\mathbf y_{s},\quad \mathbf y_{s}^{T}\mathbf x_{s}=1>0.$$
So, $\mathbf x_{s}$ and $\mathbf y_{s}$ are the right and left Perron eigenvectors of $A_{s}$, respectively. Moreover, by (\ref{eq5}), we also have
$$\lambda _{d}=\frac{\mathbf y_{s}^{T}A_{d}\mathbf x_{s}}{\mathbf y_{s}^{T}\mathbf x_{s}}=b_{11}.$$

Since the dual number matrix $\lambda^{-1}A$ is also primitive and its Perron eigenvalue is $1$, we consider its Jordan decomposition and still use the above notations. Without loss of generality, we may assume that
$$\lambda ^{-1}A=P{\rm diag}\left ( 1, J_{1},\cdots ,J_{t}\right )P^{-1}+PBP^{-1}\varepsilon ,$$
where $B=P^{-1}(\lambda^{-1}A)_{d}P=(b_{ij})$ and $J_{1},J_{2},\cdots ,J_{t}$ are Jordan blocks of $(\lambda ^{-1}A)_s$ that belong to the eigenvalues $\lambda _{1},\lambda _{2},\cdots ,\lambda _{t}$, respectively. Furthermore, $\left |\lambda _{i} \right |<1$ $(i=1,2,\ldots,t)$ and $b_{11}=0$.

Let $D={\rm diag}\left ( 1, J_{1},\cdots ,J_{t}\right )$. Similar to (\ref{dmeq1}), we have
\begin{equation}\label{lamda}
P^{-1}\left (\lambda ^{-1}A \right )^{k}P=D^{k}+\left ( BD^{k-1}+DBD^{k-2}+\cdots +D^{k-1}B\right )\varepsilon,
\end{equation}
where $D^l={\rm diag}\left ( 1, J^l_{1},\cdots ,J_{t}^l\right )$ for any positive integer $l$.
Since $\left |\lambda _{i} \right |<1$ $(i=1,2,\ldots,t)$, it follows from (\ref{dmeq2}) that $J_{i}^{k}\to O$ as $k\to\infty$.

According to the $t+1$ Jordan blocks, we divide the real matrix $B$ as follows:
$$B=
\begin{bmatrix}
    0 & B_{01} & \cdots  &B_{0 t} \\
    B_{10} & B_{11} & \cdots  & B_{1t}\\
    \vdots  & \vdots  & \ddots  &\vdots  \\
     B_{t0}& B_{t1} &\cdots   & B_{tt}
\end{bmatrix}.$$
Then, for any positive integers $u,v$, we have
\begin{equation*}
D^uBD^v=\begin{bmatrix}
        0 & B_{01}J_{1}^{v} & \cdots  &B_{0 t}J_{t}^{v} \\
        J_{1}^{u}B_{10} & J_{1}^{u}B_{11}J_{1}^{v} & \cdots  & J_{1}^{u}B_{1t}J_{t}^{v}\\
        \vdots  & \vdots  & \ddots  &\vdots  \\
        J_{t}^{u} B_{t0}& J_{t}^{u}B_{t1}J_{1}^{v}&\cdots   & J_{t}^{u}B_{tt}J_{t}^{v}
    \end{bmatrix}.
 \end{equation*}
 Hence, it follows from (\ref{lamda}) that
{\small \begin{equation*}
P^{-1}\left (\lambda ^{-1}A \right )^{k}P=D^{k}+\begin{bmatrix}
         0 & B_{01}\sum\limits_{u=0}^{k-1}J_{1}^{u}& \cdots  &B_{0 t}\sum\limits_{u=0}^{k-1}J_{t}^{u}\\
         \sum\limits_{u=0}^{k-1}J_{1}^{u}B_{10} & \sum\limits_{u=0}^{k-1}J_{1}^{u}B_{11}J_{1}^{k-1-u} & \cdots  & \sum\limits_{u=0}^{k-1}J_{1}^{u}B_{11}J_{t}^{k-1-u}\\
         \vdots  & \vdots  & \ddots  &\vdots  \\
         \sum\limits_{u=0}^{k-1}J_{t}^{u}B_{t0}& \sum\limits_{u=0}^{k-1}J_{t}^{u}B_{11}J_{1}^{k-1-u}&\cdots   & \sum\limits_{u=0}^{k-1}J_{t}^{u}B_{11}J_{t}^{k-1-u}
    \end{bmatrix}\varepsilon.
\end{equation*}}
For $i=1,2,\ldots,t$, assume that the dimension of  $J_i$ is $n_i$, then $n_1+\cdots+n_t=n-1$. By (\ref{dmeq2}), we have
\begin{equation*}
   \sum\limits_{u=0}^{k-1}J_{i}^{u}=
    \begin{bmatrix}
         \sum\limits_{u=0}^{k-1}\lambda _{i}^{u} & \cdots  &\sum\limits_{u=0}^{k-n_i}\mathrm{C}_{n_i-1+u}^{n_i-1}\lambda _{i}^{u} \\
         & \ddots  &\vdots  \\
         &  & \sum\limits_{u=0}^{k-1}\lambda _{i}^{u}
    \end{bmatrix}.
\end{equation*}
For $m=1,2,\ldots,n_i$, we have $$\mathrm{C}_{m-1+u}^{m-1}\lambda _{i}^{u}=\mathrm{C}_{m-1+u}^{m-1}\lambda _{i}^{\frac{u}{2}}\lambda _{i}^{\frac{u}{2}}.$$
Since $\mathrm{C}_{m-1+u}^{m-1}\lambda _{i}^{\frac{u}{2}}{\rightarrow}0$ as ${u\rightarrow \infty }$,  
and the series $\sum\limits_{u=0}^{+\infty }\lambda _{i}^{\frac{u}{2}}$ is convergent, so
$$\sum\limits_{u=0}^{k-m}\mathrm{C}_{m-1+u}^{m-1}\lambda _{i}^{u}$$ is also convergent as $k\to\infty$. Additionally,
similar to the proof of Theorem \ref{thm2}, we have
$$\lim_{k\to\infty}\sum\limits_{u=0}^{k-1}J_{i}^{u}B_{ij}J_{j}^{k-1-u}=O, \quad i,j=1,2,\ldots,t.
$$
so $\left (\lambda ^{-1}A \right )^{k}$ is convergent as $k\to\infty$. Hence, we can rewrite $\left (\lambda ^{-1}A \right )^{k}$ as $$\left (\lambda ^{-1}A \right )^{k}=\mathbf x_{s}\mathbf y_{s}^{T}+M_{k}\quad \text{with ~~$\lim_{k\to\infty}M_{k}=C\varepsilon$}$$ for some real matrix $C$.
Thus,
$$\dfrac{\left (\left (\lambda ^{-1}A \right )^{k+1}\mathbf x^{0} \right )_{i}}{\left (\left (\lambda ^{-1}A \right )^{k}\mathbf x^{0} \right )_{i}}=\dfrac{\left (\left (\mathbf x_{s}\mathbf y_{s}^{T}+M_{k+1} \right )\mathbf x^{0} \right )_{i}}{\left (\left (\mathbf x_{s}\mathbf y_{s}^{T}+M_{k} \right )\mathbf x^{0} \right )_{i}}\rightarrow 1~~~(k\to\infty),$$
which together with (\ref{collatz1}) and (\ref{collatz2}) means
$$\lim_{k\rightarrow \infty }\left ( \overline{ \lambda}_{k}-\underline{\lambda}_{k}\right )=0,$$ and hence, $$\lim_{k\rightarrow \infty } \overline{ \lambda}_{k}=\lim_{k\rightarrow \infty }\underline{\lambda}_{k}=\lambda.$$
This shows that the Collatz method is convergent for primitive dual number matrices.

Next, we will provide the convergence rate of the Collatz method. Denote $\sigma=\underset{1\le i\le t}{\max} \left | \lambda_{i}\right |$. Clearly, $\sigma<1$. It follows from (\ref{dmeq2}) that
\begin{equation*}
J_{i}^{k}=\begin{bmatrix}
\lambda_i^k & k\lambda_i^{k-1} & \cdots & \mathrm{C}_k^{n_i-1}\lambda_i^{k-n_i+1}\\
0 & \lambda_i^k & \cdots & \mathrm{C}_k^{n_i-2}\lambda_i^{k-n_i+2}\\
\vdots &\vdots &  \ddots & \vdots \\
0 & 0 &\cdots &  \lambda_i^k
\end{bmatrix},\quad  i=1,2,\ldots,t.
\end{equation*}
By direct computations, the $(v,w)$-th element of $J_{i}^{k}$ satisfies
$$\left |\left (J_{i}^{k} \right )_{vw} \right |\leq \sigma^{1-n_i}k^{n_i}\sigma^{k}.$$
Hence, there exists sufficiently large $k_1$ such that
\begin{equation}\label{eqk1}
\left |\left (J_{i}^{k} \right )_{vw} \right |\leq \alpha ^{\frac{k}{2}},\quad \forall k\ge k_1.
\end{equation}
Similarly, since
\begin{equation*}
    \begin{aligned}
\left |\left (\sum_{u=k-n_i+1}^{+\infty } J_{i}^{u}\right )_{vw} \right | \leq  \underset{1\leq m \leq n_i}{\max} \left \{ \sum_{u=k}^{+\infty } \mathrm{C}_{u}^{m-1}\sigma ^{u-m+1}  \right \} \leq \sigma ^{1-n_i} \sum_{u=k}^{+\infty }u^{n_i}\sigma ^{u},
\end{aligned}
\end{equation*}
thus, there exists $k_{2}\in \mathbb{N}$ such that when $k\ge k_2$ we have
\begin{equation}\label{eqk2}
\left |\left (  B_{0,i} \sum_{u=k-n_i+1}^{+\infty } J_{i}^{u}\right )_{vw} \right | \leq  \sigma^{\frac{k}{2}}, \quad \left |\left ( \sum_{u=k-n_i+1}^{+\infty } J_{i}^{u}B_{i,0}\right )_{vw} \right | \leq  \sigma^{\frac{k}{2}}.
\end{equation}
Additionally, similar to the proof of Theorem \ref{thm2}, we have from (\ref{dmeq4}) that
$$\left |\left (\sum_{u=0}^{k-1}J_{i}^{u}B_{ij}J_{j}^{k-1-u} \right )_{vw} \right |\leq \alpha n^{4}\sigma ^{-2n+1}k^{2n}\sigma^{k},$$
where $\alpha=\max_{1\le i,j\le t}\|B_{ij}\|_1$.
Thus, there exists  $k_{3}\in \mathbb{N}$ such that when $k\geq k_{3}$,
\begin{equation}\label{eqk3}
\left |\left (\sum_{u=0}^{k-1}J_{i}^{u}B_{ij}J_{j}^{k-1-u} \right )_{vw} \right |\leq \sigma^{\frac{k}{2}}.
\end{equation}

Let $\eta=\sqrt{\sigma}$ and $R_k=(\lambda^{-1}A)^{k}-\left (\mathbf x_{s}\mathbf y_{s}^{T}+C\varepsilon  \right )$. Then, combining (\ref{eqk1}), (\ref{eqk2}) and (\ref{eqk3}), there exists a constant $L>0$ such that
\begin{equation}\label{eqk123}
\left|(R_k)_{s,vw}\right |\leq L\eta^{k},\quad \left|(R_k)_{d,vw}\right |\leq L\eta^{k}, \quad \forall k\ge \max\{k_1,k_2,k_3\}.
\end{equation}
For $i=1,\ldots,n$, we have
\begin{equation}\label{eqsd}
\begin{aligned}
\dfrac{\left (A\left (\lambda ^{-1}A \right )^{k}\mathbf x^{0} \right )_{i}}{\left (\left (\lambda ^{-1}A \right )^{k}\mathbf x^{0} \right )_{i}}&=\lambda\dfrac{\left (\left (\lambda ^{-1}A \right )^{k+1}\mathbf x^{0} \right )_{i}}{\left (\left (\lambda ^{-1}A \right )^{k}\mathbf x^{0} \right )_{i}}\\
&=\lambda\dfrac{\left ((\mathbf x_{s}\mathbf y_{s}^{T}+C\varepsilon+R_{k+1})\mathbf x^{0} \right )_{i}}{\left ((\mathbf x_{s}\mathbf y_{s}^{T}+C\varepsilon+R_{k})\mathbf x^{0} \right )_{i}}\\
&=\lambda\dfrac{(\mathbf x_{s}\mathbf y_{s}^{T}\mathbf x^{0}+C\mathbf x^{0}\varepsilon)_i+(R_{k+1}\mathbf x^{0})_{i}}{(\mathbf x_{s}\mathbf y_{s}^{T}\mathbf x^{0}+C\mathbf x^{0}\varepsilon)_i+(R_{k}\mathbf x^{0})_{i}}\\
&=\lambda\dfrac{1+\dfrac{(R_{k+1}\mathbf x^{0})_{i}}{(\mathbf x_{s}\mathbf y_{s}^{T}\mathbf x^{0}+C\mathbf x^{0}\varepsilon)_i}}{1+\dfrac{(R_{k}\mathbf x^{0})_{i}}{(\mathbf x_{s}\mathbf y_{s}^{T}\mathbf x^{0}+C\mathbf x^{0}\varepsilon)_i}},
\end{aligned}
\end{equation}
and by Definition \ref{def2} (iii),
\begin{equation*}
\begin{aligned}
\dfrac{(R_{k}\mathbf x^{0})_{i}}{(\mathbf x_{s}\mathbf y_{s}^{T}\mathbf x^{0}+C\mathbf x^{0}\varepsilon)_i}&=\dfrac{(R_{k}\mathbf x^{0})_{i,s}+(R_{k}\mathbf x^{0})_{i,d}\varepsilon}{\mathbf y_{s}^{T}\mathbf x^{0}(\mathbf x_{s})_i+(C\mathbf x^{0})_i\varepsilon}\\
&=\dfrac{(R_{k}\mathbf x^{0})_{i,s}}{\mathbf y_{s}^{T}\mathbf x^{0}(\mathbf x_{s})_i}+\left(\dfrac{(R_{k}\mathbf x^{0})_{i,d}}{\mathbf y_{s}^{T}\mathbf x^{0}(\mathbf x_{s})_i}-\dfrac{(R_{k}\mathbf x^{0})_{i,s}(C\mathbf x^{0})_i}{(\mathbf y_{s}^{T}\mathbf x^{0}(\mathbf x_{s})_i)^2}\right)\varepsilon.
\end{aligned}
\end{equation*}
We consider the standard part and dual part of the above dual number. Note that $\mathbf x^{0}, \mathbf x_{s}, \mathbf y_{s}\in\mathbb{R}_{++}^n$ and denote $\delta=\mathbf y_{s}^{T}\mathbf x^{0}$, $\mu=\min_{1\le i\le n}\{(\mathbf x_{s})_i\}$. Then, it follows from (\ref{eqk123}) that
\begin{equation}\label{eqlem1}
\left|\dfrac{(R_{k}\mathbf x^{0})_{i,s}}{\mathbf y_{s}^{T}\mathbf x^{0}(\mathbf x_{s})_i}\right|=\left|\dfrac{\left((R_{k})_s\mathbf x^{0}\right)_{i}}{\delta(\mathbf x_{s})_i}\right|\le \dfrac{L\|\mathbf x^0\|_1}{\delta\mu}\eta^k,\quad \forall k\ge \max\{k_1,k_2,k_3\},
\end{equation}
and when $k\ge \max\{k_1,k_2,k_3\}$, it also holds
\begin{equation}\label{eqlem2}
\left|\dfrac{(R_{k}\mathbf x^{0})_{i,d}}{\mathbf y_{s}^{T}\mathbf x^{0}(\mathbf x_{s})_i}-\dfrac{(R_{k}\mathbf x^{0})_{i,s}(C\mathbf x^{0})_i}{(\mathbf y_{s}^{T}\mathbf x^{0}(\mathbf x_{s})_i)^2}\right|\le \left(\dfrac{L\|\mathbf x^0\|_1}{\delta\mu}+\dfrac{L\|\mathbf x^0\|^2_1\|C\|_{\infty}}{(\delta\mu)^2}\right)\eta^k.
\end{equation}
Let 
$$\Delta=\dfrac{L\|\mathbf x^0\|_1}{\delta\mu}+\dfrac{L\|\mathbf x^0\|^2_1\|C\|_{\infty}}{(\delta\mu)^2}.
$$
Then, in light of (\ref{eqlem1}) and (\ref{eqlem2}), the absolute values of the standard part and dual part of $$\dfrac{(R_{k}\mathbf x^{0})_{i}}{(\mathbf x_{s}\mathbf y_{s}^{T}\mathbf x^{0}+C\mathbf x^{0}\varepsilon)_i}$$ are no more that $\Delta\eta^k$ when $k\ge \max\{k_1,k_2,k_3\}$.

On the other hand, it follows from (\ref{eqk123}) that  for $k\ge \max\{k_1,k_2,k_3\}$,
\begin{equation*}
\left|(R_{k+1})_{s,vw}\right |\leq L\eta^{k+1}<L\eta^{k},\quad \left|(R_k)_{d,vw}\right |\leq L\eta^{k+1}<L\eta^{k}.
\end{equation*}
Hence,  similar to the proof of (\ref{eqlem1}) and (\ref{eqlem2}), we can obtain that the absolute values of the standard part and dual part of $$\dfrac{(R_{k+1}\mathbf x^{0})_{i}}{(\mathbf x_{s}\mathbf y_{s}^{T}\mathbf x^{0}+C\mathbf x^{0}\varepsilon)_i}$$ are no more that $\Delta\eta^k$ when $k\ge \max\{k_1,k_2,k_3\}$.

Hence, by Lemma \ref{thm3} and (\ref{eqsd}), we know that the absolute values of the standard part and dual part of
\begin{equation*}
\underset{i}{\max }\dfrac{\left (A\left (\lambda ^{-1}A \right )^{k}\mathbf x^{0} \right )_{i}}{\left (\left (\lambda ^{-1}A \right )^{k}\mathbf x^{0} \right )_{i}}-\underset{i}{\min }\dfrac{\left (A\left (\lambda ^{-1}A \right )^{k}\mathbf x^{0} \right )_{i}}{\left (\left (\lambda ^{-1}A \right )^{k}\mathbf x^{0} \right )_{i}}
\end{equation*}
are no more than
$$8\Delta\left ( 2\lambda _{s}+\left | \lambda _{d}\right |\right)\eta^{k}
$$when $k\ge \max\{k_1,k_2,k_3\}$. Let $\tilde{\Delta}=8\Delta\left ( 2\lambda _{s}+\left | \lambda _{d}\right |\right)$. Then,  the absolute values of the standard part and the dual part of
$\overline{ \lambda}_{k}-\underline{ \lambda}_{k}$ are no more than $\tilde{\Delta}\eta^k$
for sufficiently large $k$. Hence, the convergence rate of the dual number sequence $\left \{  \overline{ \lambda}_{k}-\underline{ \lambda}_{k}  \right \}$ is $R$-linear.
\end{proof}

The above theorem shows that if the initial point $\mathbf{x}^0$ is a positive real vector then the Collatz method (\ref{collatz1}) and (\ref{collatz2}) $R$-linearly converges to the Perron eigenvalue $\lambda$ of a primitive dual number matrix $A$ with an explicit rate $\eta=\sqrt{\sigma}$ where $\sigma=\underset{1\le i\le t}{\max} \left | \lambda_{i}\right |$ and $\lambda_{i} (i=1,\ldots,t)$ are other eigenvalues except for the Perron eigenvalue $1$ of $\lambda^{-1}A$.

\section{Conclusion}\label{conclusion}
Qi and Cui \cite{qcnew} extended Perron-Frobenius theorem for nonnegative matrices to dual number matrices with primitive or irreducible nonnegative standard parts and proposed the Collatz method to compute their Perron eigenvalues. However, they have not obtained the theoretical proofs for its convergence. In this paper, we first show the necessary and sufficient condition for convergence of $k$-order power of a dual number matrix (Theorem \ref{thm2}). Based on Theorem \ref{thm2} and Jordan decomposition of primitive real matrices, we show that the dual number sequences generated by the Collatz method (\ref{collatz1}) and (\ref{collatz2}) converges to the Perron eigenpair under the condition of a positive real vector as initial iterative point. An explicit $R$-linear rate of the Collatz method  is given (Theorem \ref{thm4}).


\section*{References}

\bibliographystyle{elsarticle-harv}


\end{document}